\newcommand{\dx}{\, \mathrm{d}x}
\newcommand{\dX}{\, \mathrm{d}X}
\newcommand{\R}{\mathbb{R}}
\newcommand{\tab}{\hspace*{2em}}
\newtheorem{theorem}{Theorem}
\newtheorem{lemma}{Lemma}
\title{Efficient Compilation of a Class of \\ Variational Forms}
\author{Robert C. Kirby \\ The University of Chicago
        \and
	Anders Logg \\ Simula Research Laboratory}
\begin{abstract}
  We investigate the compilation of general multilinear variational
  forms over affines simplices and prove a representation theorem for
  the representation of the element tensor (element stiffness matrix)
  as the contraction of a constant reference tensor and a geometry
  tensor that accounts for geometry and variable coefficients. Based
  on this representation theorem, we design an algorithm for efficient
  pretabulation of the reference tensor.  The new algorithm has been
  implemented in the FEniCS Form Compiler~(FFC) and improves on a
  previous loop-based implementation by several orders of magnitude,
  thus shortening compile-times and development cycles for users of
  FFC.
\end{abstract}
\keywords{granularity, loop hoisting, BLAS, monomials,
compiler, variational form, finite element, automation}
\begin{document}

\begin{bottomstuff}
Robert C. Kirby, Department of Computer Science, University of Chicago,
1100 East 58th Street, Chicago, Illinois 60637, USA.
\emph{Email:} \texttt{kirby@cs.uchicago.edu}.
This work was supported by the United States Department of Energy
under grant DE-FG02-04ER25650.
\newline
Anders Logg,
Simula Research Laboratory,
Martin Linges v 17, Fornebu,
PO Box 134, 1325 Lysaker, Norway.
\emph{Email:} \texttt{logg@simula.no}.
\end{bottomstuff}

\maketitle

\section{Introduction}

It is our goal to improve the efficiency of compiling variational
forms with FFC, the FEniCS Form Compiler, previously presented
in~\cite{logg:article:10}. FFC automatically generates efficient
low-level code for evaluating a wide class of multilinear variational
forms associated with finite element
methods~\cite{Cia76,Hug87,BreSco94,EriEst96} for partial differential
equations. However, the efficiency of FFC decreases rapidly with the
complexity of the variational form and polynomial degree. In this
paper, we investigate the core algorithms of FFC and rephrase them so
as to diminish interpretive overhead and make better use of optimized
numerical libraries. We thus wish to decrease the run-time
for the form compiler, corresponding to a reduced compile-time for a
finite element code. This becomes
particularly important when FFC is used as a just-in-time compiler to
generate and compile code on the fly at run-time.

\subsection{FFC, the FEniCS Form Compiler}

FFC~\cite{logg:www:04} was first released in 2004 as a prototype
compiler for variational forms, automating a key step in the
implementation of finite element methods.~\cite{logg:thesis:03}.
Given a multilinear variational form and an affinely mapped simplex,
FFC automatically generates low-level code for evaluation of the
variational form (assembly of the associated linear system). More
precisely, FFC generates efficient low-level code for computation of
the element tensor (element stiffness matrix), based on the novel
approach of representing the element tensor as a special tensor
contraction presented earlier in~\cite[SISC]{logg:article:07}
and~\cite{KirKne04}. FFC is implemented in Python and provides
both a command-line and Python interface for the specification of
variational forms in a syntax very close to the mathematical notation.
Together with other components of the FEniCS
project~\cite[FEniCS]{logg:www:03}, such as
FIAT~\cite{www:FIAT,Kir04,Kir05} and
DOLFIN~\cite[DOLFIN]{logg:www:01}, FFC automates some central aspects
of the finite element method.

There exist today a number of competing efforts that strive to
automate the finite element method. One such example is
Sundance~\cite{Lon03}, which similarly to FFC provides a system for
automated assembly/evaluation of variational forms given in mathematical
notation. A main difference between Sundance and FFC is that Sundance
provides a run-time system for parsing and evaluation of variational
forms, whereas FFC precomputes important quantities at compile-time,
which in many cases allows for generation of more efficient code for
the run-time assembly of linear systems. Automated assembly from a
high-level specification of a variational form is also supported by
FreeFEM~\cite{www:FreeFEM}, GetDP~\cite{www:GetDP} and
Analysa~\cite{www:Analysa}, which all implement domain-specific
languages for specification and implementation of finite element
methods for partial differential equations. Other projects such as
Diffpack~\cite{Lan99} and deal.II~\cite{www:deal.II} provide
sophisticated libraries aiding implementation of finite element
methods.  These libraries provide tools such as meshes, ordering of
degrees of freedom, and interfaces to solvers, but do not provide
automated evaluation of variational forms.

Since the first release of FFC, a number of improvements have been
made, mostly improving on the functionality of the compiler. New
features that have been added since the work~\cite{logg:article:10}
include support for mixed finite element formulations, an extension of
the form language to include linear algebra operations such as inner
products and matrix-vector products, differential operators such as
the gradient, divergence and rotation, local projections between
finite element spaces and an option to generate code in terms of level
2 BLAS operations. FFC also functions as a just-in-time compiler for
PyDOLFIN, the Python interface of DOLFIN~\cite[DOLFIN]{logg:www:01}.

However, the performance of FFC has been suboptimal, potentially
lengthening development cycles for high-order simulations in three
dimensions.  Additionally, this inefficiency would inhibit the
embedding of FFC in a run-time system, such as Sundance, as a
just-in-time compiler.

\subsection{Main results}
The main purpose of this paper is twofold.  First, we extend and
formalize our particular representation of multilinear variational
forms.  This involves writing each form as a sum of {\em monomials},
which are integrals of products of (derivatives of) the basis
functions.  Hence, we prove a representation theorem showing that the
evaluation of any monomial form is equivalent to a contraction of a
\emph{reference tensor} with a \emph{geometry tensor}. This makes
precise what we have discussed in previous
work~\cite{logg:article:07,logg:article:10}. Second, as evaluating the
reference tensor is a dominant cost of FFC, we discuss how to improve
the efficiency of building the reference tensor for each monomial by
rewriting the computation in terms of operations that may be performed
by optimized libraries and by hoisting loop invariants. The loop
hoisting is non-trivial since the depth of the loop nesting is not
known until the code is executed. The speedups gained with the new
algorithm for a series of test cases range between one and three
orders of magnitude.  Furthermore, we introduce the concept of
\emph{signatures} for monomial forms to allow factorization of common
monomial terms when evaluating a given multilinear form.

\subsection{Outline of this paper}

In Section~\ref{sec:evaluation}, we first derive a representation for
the element tensor as a contraction of two tensors, which is at the
core of the implementation of FFC. We then, in
Section~\ref{sec:algorithm}, discuss different approaches to
precomputation of the monomial integrals that appear in this tensor
representation, concluding that a suitable rearrangement of the
computation can lead to significant improvement in performance.

To test the new algorithm, we present in Section~\ref{sec:benchmarks}
benchmark results for a series of test cases, comparing the latest
version of FFC with a previous version. Finally, we summarize our
findings in Section~\ref{sec:conclusion}.

\section{Evaluation of multilinear forms}
\label{sec:evaluation}

We review here the basic idea of tensor representation of multilinear
variational forms, as first presented in
\cite{logg:article:07,logg:article:10}, and derive a representation
theorem for a general class of multilinear forms.

At the core of finite element methods for partial differential
equations is the assembly of a linear system from a given bilinear
form. In general, we consider a multilinear form
\begin{equation}
  a : V_1 \times V_2 \times \cdots \times V_r \rightarrow \R,
\end{equation}
defined on the product space $V_1 \times V_2 \times \cdots \times V_r$
of a given set $\{V_i\}_{i=1}^r$ of discrete function spaces on a
triangulation $\mathcal{T}$ of a domain $\Omega \subset \R^d$.

Typically, $r = 1$ (linear form) or $r = 2$ (bilinear form), but the
form compiler FFC can handle multilinear forms of arbitrary
\emph{arity} $r$. In the simplest case, all function spaces are equal
but there are many important examples, such as mixed methods, where it
is important to consider arguments coming from different function
spaces.

\subsection{The element tensor}

Let
$\{\phi_i^1\}_{i=1}^{N_1},
 \{\phi_i^2\}_{i=1}^{N_2}, \ldots,
 \{\phi_i^r\}_{i=1}^{N_r}$
be bases of $V_1, V_2, \ldots, V_r$ and let $i = (i_1, i_2, \ldots,
i_r)$ be a multiindex. The multilinear form $a$ then
defines a rank $r$ tensor, given by
\begin{equation}
  A_i = a(\phi_{i_1}^1, \phi_{i_2}^2, \ldots, \phi_{i_r}^r).
\end{equation}
In the case of a bilinear form, the tensor $A$ is a matrix (the
stiffness matrix), and in the case of a linear form, the tensor $A$ is
a vector (the load vector).

As discussed in \cite{logg:article:10}, to compute the tensor $A$ by
assembly, we need to compute the \emph{element tensor} $A^K$ on each
element $K$ of the triangulation $\mathcal{T}$ of $\Omega$. With
$\{\phi^{K,1}_i\}_{i=1}^{n_1}$ the restriction to $K$ of the subset
of $\{\phi_i^1\}_{i=1}^{N_1}$ supported on $K$,
$V_1^K = \mathrm{span} \{\phi^{K,1}_i\}_{i=1}^{n_1}$
and the local spaces $V_2^K, \ldots, V_r^K$ defined similarly, we need to
evaluate the rank $r$ \emph{element tensor} $A^K$, given by
\begin{equation}
  A^K_i = a_K(\phi_{i_1}^{K,1}, \phi_{i_2}^{K,2}, \ldots,
  \phi_{i_r}^{K,r}) \quad \forall i \in \mathcal{I},
\end{equation}
where $a_K$ is the local contribution to the given multilinear
form~$a$ on the element~$K$ and where $\mathcal{I}$ is the index set
\begin{equation}
  \mathcal{I} =
  \prod_{j=1}^r[1,|V_j^K|] = \{(1,1,\ldots,1), (1,1,\ldots,2), \ldots,
  (n_1,n_2,\ldots,n_r)\}.
\end{equation}

We restrict our discussion to multilinear forms that may be written as
a sum over terms consisting of integrals over $\Omega$ of products of
derivatives of functions from sets of discrete spaces
$\{V_i\}_{i=1}^r$.  We call such terms {\em monomials}.  For one such
term, the element tensor takes the following (preliminary) form:
\begin{equation} \label{eq:preliminary}
  A^K_i = \int_K
  \prod_{j=1}^r D_x^{\delta_j} \phi^{K,j}_{\iota_j(i)} \dx,
\end{equation}
where the subscript $\iota_j(i)$ picks out a basis function from the
restriction of $V_j$ to $K$ for the current multiindex $i$ and where
$\delta_j$ is the multiindex for the corresponding derivative.
For sequences of multiindices such as $\{\delta_j\}_{j=1}^r$, we use
the convention that $\delta_{jk}$ denotes the $k$th element of the
$j$th multiindex for $k=1,2,\ldots,|\delta_j|$.

To explain the notation, we consider a couple of illustrative
examples. First, consider the bilinear form $a(v, u) = \int_{\Omega} v
u \dx$ corresponding to a mass matrix. The element tensor (matrix) is
then given by
\begin{equation}
  A^K_i = \int_K \phi_{i_1}^{K,1} \phi_{i_2}^{K,2} \dx,
\end{equation}
and so, in the notation of~(\ref{eq:preliminary}), we
have $r = 2$, $\iota_j(i) = i_j$ and $\delta_j = \emptyset$ for
$j=1,2$, where $\emptyset$ denotes an empty multiindex (basis function
is not differentiated).

Next, we consider the bilinear form
$a(v, u) = \int_{\Omega} v
\frac{\partial^2 u}{\partial x_1 \partial x_2} \dx$ with corresponding
element tensor given by
\begin{equation}
  A^K_i = \int_K \phi_{i_1}^{K,1} \frac{\partial^2
  \phi_{i_2}^{K,2}}{\partial x_1 \partial x_2} \dx,
\end{equation}
which we can phrase in the notation of~(\ref{eq:preliminary})
with $r = 2$, $\iota_j(i) = i_j$, $\delta_1 = \emptyset$
and $\delta_2 = (1, 1)$.

More generally, variational problems involve sums over monomial terms,
each of which may include a spatially varying coefficient.  We express
such a coefficient in a finite element basis as $\sum_{\gamma=1}^n
c_{\gamma} \phi_{\gamma}$. Also, the function spaces involved may each
be vector-valued.  While we might reduce this situation to a
collection of cases of the form (\ref{eq:preliminary}), we instead
extend our canonical form.
This allows us to write
the element tensor as
\begin{equation} \label{eq:canonical}
  A^K_i =
  \sum_{\gamma\in\mathcal{C}}
  \int_K
  \prod_{j=1}^m
  c_j(\gamma)
  D_x^{\delta_j(\gamma)}
  \phi^{K,j}_{\iota_j(i,\gamma)}[\kappa_j(\gamma)] \dx,
\end{equation}
with summation over some appropriate index set $\mathcal{C}$, where
$\kappa_j(\gamma)$ denotes a component index for factor~$j$ depending on~$\gamma$.
To distinguish a component index from a basis function index, we here use
$[\cdot]$ to denote a component index. Note that the number of
factors~$m$ may be different from the rank~$r$ of the element tensor
(arity of the form).

To illustrate the notation, we consider the
bilinear\footnote{Note that we may alternatively consider
this to be a \emph{trilinear form}, if we think of the coefficient~$w$ as a free
argument to the form and not as a fixed given function.}
form on $V_1 \times V_2$ for the weighted vector-valued Poisson's equation
with given variable coefficient function $w \in V_3$:
\begin{equation}
  a(v, u)
  = \int_{\Omega} w \nabla v : \nabla u \dx
  =
  \sum_{\gamma_1=1}^d \sum_{\gamma_2=1}^d
  \int_{\Omega} w
  \frac{\partial v[\gamma_1]}{\partial x_{\gamma_2}}
  \frac{\partial u[\gamma_1]}{\partial x_{\gamma_2}} \dx.
\end{equation}
The corresponding element tensor is given by
\begin{equation}
  A^K_i =
  \sum_{\gamma_1=1}^d \sum_{\gamma_2=1}^d \sum_{\gamma_3=1}^{|V_3^K|}
  \int_{\Omega}
  \frac{\partial \phi^{K,1}_{i_1}[\gamma_1]}{\partial x_{\gamma_2}}
  \frac{\partial \phi^{K,2}_{i_2}[\gamma_1]}{\partial x_{\gamma_2}}
  w^K_{\gamma_3}\phi^{K,3}_{\gamma_3} \dx,
\end{equation}
with $\{w^K_{\gamma_3}\}_{\gamma_3=1}^{|V_3^K|}$ the expansion
coefficients for $w$ in the
local basis on $K$ for $V_3$.
In the notation of (\ref{eq:canonical}),
we thus have $r = 2$, $m = 3$,
$\iota(i, \gamma) = (i_1, i_2, \gamma_3)$,
$\delta(\gamma) = (\gamma_2, \gamma_2, \emptyset)$,
$\kappa(\gamma) = (\gamma_1, \gamma_1, \emptyset)$ and
$c_j(\gamma) = (1, 1, w^K_{\gamma_3})$.
The index set $\mathcal{C}$ is given by
$\mathcal{C} = \{(\gamma_1,\gamma_2,\gamma_3) \} =
[1,d]^2 \times [1,|V_3^K|]$.

One may argue that the canonical form
(\ref{eq:canonical}) may always be reduced to the
simpler form (\ref{eq:preliminary}) by considering any
given element tensor as a suitable transformation (summation and
multiplication with coefficients) of basic element tensors of the
simple form (\ref{eq:preliminary}).  However, we shall
consider the more general form (\ref{eq:canonical})
since it increases the granularity of the operation of computing the
reference element tensor, which is the operation we set out to
optimize. It is also a more flexible representation in that it allows
us to directly express the element tensor for a wider range of
multilinear forms such as that for the vector-valued Poisson's
equation.

\subsection{Representing the element tensor as a tensor contraction}

We may write the element tensor for any (affine) element $K$ as a
contraction of one tensor depending only on the form and function spaces
with another depending only on the geometry and coefficients in the
problem. This is accomplished by affinely transforming from $K$ to a
reference element $K_0$. In order to show this, we first state some
basic results providing a notational framework for our representation
theorem.

We first make the following observation
about interchanging product and summation.
\begin{lemma}[(Interchanging product and summation)] \label{lem:changeorder}
  With $\mathcal{J}_1, \mathcal{J}_2, \ldots, \mathcal{J}_m$
  a given sequence of index sets, product and summation
  may be interchanged as follows:
  \begin{equation} \label{eq:changeorder}
    \prod_{i=1}^m \sum_{j\in\mathcal{J}_i} a_{ij} =
    \sum_{j \in \mathcal{J}_1 \times \ldots \times \mathcal{J}_m}
    \prod_{i=1}^m a_{ij_i} =
    \sum_{j \in \prod_{k=1}^m \mathcal{J}_k}
    \prod_{i=1}^m a_{ij_i},
  \end{equation}
  where each $j \in \prod_{k=1}^m \mathcal{J}_k$ is a multiindex
  of length $|j| = m$.
\end{lemma}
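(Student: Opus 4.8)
The plan is to prove the identity by induction on the number of factors $m$, since the statement is essentially a generalized distributive law. The second equality between the two right-hand sides is purely notational: $\mathcal{J}_1 \times \cdots \times \mathcal{J}_m$ and $\prod_{k=1}^m \mathcal{J}_k$ denote the same Cartesian product set, whose elements are the length-$m$ multiindices $j = (j_1, \ldots, j_m)$ with $j_k \in \mathcal{J}_k$. So it suffices to establish the first equality. Throughout I would assume the index sets are finite, so that all sums are finite and the ordinary distributive law applies without any question of convergence or of reordering terms.

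For the base case $m = 1$, both sides reduce to $\sum_{j\in\mathcal{J}_1} a_{1j}$: on the right a length-one multiindex $j = (j_1)$ simply ranges over $\mathcal{J}_1$, and $\prod_{i=1}^1 a_{ij_i} = a_{1j_1}$. For the inductive step I would assume the identity for $m-1$ factors and split the product over $m$ factors as the product of the first $m-1$ factors times the last factor $\sum_{j\in\mathcal{J}_m} a_{mj}$. Applying the induction hypothesis to the first part rewrites it as a single sum over $j' \in \prod_{k=1}^{m-1}\mathcal{J}_k$ of $\prod_{i=1}^{m-1} a_{ij'_i}$.

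The remaining step is to distribute the factor $\sum_{j_m\in\mathcal{J}_m} a_{mj_m}$ across this sum. By the ordinary distributive law for a finite sum times a finite sum, this produces a double sum over pairs $(j', j_m)$ with $j' \in \prod_{k=1}^{m-1}\mathcal{J}_k$ and $j_m \in \mathcal{J}_m$, whose summand is $\bigl(\prod_{i=1}^{m-1} a_{ij'_i}\bigr)\, a_{mj_m} = \prod_{i=1}^m a_{ij_i}$, where $j = (j', j_m)$. Identifying the set of such pairs with $\prod_{k=1}^m \mathcal{J}_k$ via concatenation of the length-$(m-1)$ multiindex $j'$ with the single index $j_m$ then completes the induction.

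The only real care needed --- and the closest thing to an obstacle in what is otherwise pure bookkeeping --- is tracking the multiindex arithmetic: verifying that concatenating a length-$(m-1)$ multiindex with one further index yields precisely a length-$m$ element of $\prod_{k=1}^m \mathcal{J}_k$, and that summing over all such concatenations is exactly summing over the full Cartesian product. This is what legitimizes the compact notation $\sum_{j\in\prod_k\mathcal{J}_k}\prod_i a_{ij_i}$ and, in turn, justifies its repeated use in the representation theorem that follows.
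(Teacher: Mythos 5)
Your proof is correct: the induction on $m$, splitting off the last factor and applying the ordinary distributive law, together with the identification of pairs $(j', j_m)$ with length-$m$ multiindices, establishes the identity, and your finiteness assumption matches the paper's setting of finite index sets. Note that the paper itself states this lemma as a simple observation and gives no proof at all, so your argument supplies the omitted justification in exactly the way one would expect it to be done.
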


Using the notation of Lemma~\ref{lem:changeorder}, we may also
prove the following chain rule for higher order partial derivatives.
\begin{lemma}[(Chain rule)] \label{lem:chainrule}
  If $F : \R^d \rightarrow \R^d$ is a bijective and differentiable
  mapping (a diffeomorphism) between two coordinate systems,
  $x = F(X)$, then
  \begin{equation}
      D_x^{\delta}
      = \sum_{\delta' \in [1,d]^{|\delta|}}
      \left
      (\prod_{k=1}^{|\delta|} \frac{\partial X_{\delta'_k}}{\partial x_{\delta_k}}
      \right)
      D_X^{\delta'},
  \end{equation}
  for each multiindex $\delta$,
  where
  $D_x^{\delta} = \prod_{i=1}^{|\delta|}\frac{\partial}{\partial x_{\delta_i}}$
  and
  $D_X^{\delta'} = \prod_{i=1}^{|\delta'|}\frac{\partial}{\partial X_{\delta'_i}}$.
\end{lemma}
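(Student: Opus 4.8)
The plan is to give a direct proof built on Lemma~\ref{lem:changeorder}, reducing the higher-order statement to a single application of the ordinary (first-order) chain rule in each factor. The starting point is the definition $D_x^{\delta} = \prod_{i=1}^{|\delta|} \frac{\partial}{\partial x_{\delta_i}}$, into which I substitute the classical chain rule $\frac{\partial}{\partial x_{\delta_i}} = \sum_{\delta'_i=1}^{d} \frac{\partial X_{\delta'_i}}{\partial x_{\delta_i}} \frac{\partial}{\partial X_{\delta'_i}}$ for each $i$. This gives $D_x^{\delta} = \prod_{i=1}^{|\delta|}\bigl(\sum_{\delta'_i=1}^{d} \frac{\partial X_{\delta'_i}}{\partial x_{\delta_i}}\frac{\partial}{\partial X_{\delta'_i}}\bigr)$, which is precisely the left-hand side of~(\ref{eq:changeorder}) with index sets $\mathcal{J}_i = [1,d]$ and with each operator $\frac{\partial X_{\delta'_i}}{\partial x_{\delta_i}}\frac{\partial}{\partial X_{\delta'_i}}$ playing the role of $a_{i j_i}$. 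Applying Lemma~\ref{lem:changeorder}, whose distributive identity remains valid for an ordered product of operators by bilinearity of composition, collapses this product of sums into a single sum over the multiindex $\delta' \in [1,d]^{|\delta|}$.

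It then remains, for each fixed $\delta'$, to separate the ordered product of operator factors $\prod_{k=1}^{|\delta|} \frac{\partial X_{\delta'_k}}{\partial x_{\delta_k}}\frac{\partial}{\partial X_{\delta'_k}}$ into the scalar product $\prod_{k=1}^{|\delta|}\frac{\partial X_{\delta'_k}}{\partial x_{\delta_k}}$ times the differential operator $\prod_{k=1}^{|\delta|}\frac{\partial}{\partial X_{\delta'_k}} = D_X^{\delta'}$. Carrying out this regrouping yields exactly the claimed formula, so the proof is essentially this one algebraic rearrangement once Lemma~\ref{lem:changeorder} has been invoked.

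The main obstacle — and the reason the statement holds with a single product of first-order factors rather than the full Fa\`a di Bruno expansion — is precisely this regrouping. The coefficients $\frac{\partial X_a}{\partial x_b}$ are in general functions of $x$, so the operators $\frac{\partial}{\partial X_{\delta'_k}}$ do not commute past them: the Leibniz rule would produce extra terms in which an $X$-derivative falls on a Jacobian entry, and these do not appear on the right-hand side. The resolution is that $F$ is the \emph{affine} map onto the reference element $K_0$, so its Jacobian is constant, every $\frac{\partial X_a}{\partial x_b}$ is a constant, and all such Leibniz terms vanish; this is what licenses both the commutation and the separation of the coefficient product from the operator product. I would therefore make the constancy of $\frac{\partial X}{\partial x}$ explicit at the outset and cite it as the justification for the regrouping step. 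An equivalent route is induction on $|\delta|$: peel off one factor $\frac{\partial}{\partial x_{\delta_{|\delta|}}}$, apply the first-order chain rule, pull the constant coefficient out past $D_x^{\hat{\delta}}$, invoke the inductive hypothesis, and concatenate the resulting multiindices via Lemma~\ref{lem:changeorder}; the same constant-Jacobian fact is what permits pulling the coefficient through $D_x^{\hat{\delta}}$ without generating additional terms.
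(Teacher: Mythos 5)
Your proof follows the same route as the paper's own: expand $D_x^{\delta}$ into first-order factors, apply the classical chain rule to each factor, collapse the resulting product of sums into a single sum over $\delta' \in [1,d]^{|\delta|}$ via Lemma~\ref{lem:changeorder}, and regroup the Jacobian coefficients in front of the operator product $D_X^{\delta'}$. In that respect the proposal is correct and matches the paper.

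Where you go beyond the paper is in your final paragraph, and the point is well taken: the paper's proof performs the regrouping
\begin{equation*}
  \prod_{i=1}^{|\delta|}
  \frac{\partial X_{\delta'_i}}{\partial x_{\delta_i}}
  \frac{\partial}{\partial X_{\delta'_i}}
  =
  \left(\prod_{k=1}^{|\delta|} \frac{\partial X_{\delta'_k}}{\partial x_{\delta_k}}\right)
  D_X^{\delta'}
\end{equation*}
silently, and as you observe this step discards the Leibniz terms in which $X$-derivatives fall on Jacobian entries. For a genuinely nonlinear diffeomorphism those terms do not vanish, so the lemma as stated (hypothesis: $F$ bijective and differentiable) is actually false: with $d=1$ and $\delta = (1,1)$, applying the left-hand side to a function $u$ gives $\bigl(\frac{dX}{dx}\bigr)^2 \frac{d^2u}{dX^2} + \frac{d^2X}{dx^2}\frac{du}{dX}$, whereas the right-hand side gives only the first term; the missing piece is the Fa\`a di Bruno correction. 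The identity holds precisely when $\partial X / \partial x$ is constant, i.e., when $F$ is affine --- which is the only setting in which the paper invokes the lemma (the representation theorem assumes $F_K$ affine, and uses constancy of the Jacobian to pull it out of the integral). Your proposal to make the constant-Jacobian hypothesis explicit and cite it as the licence for the commutation/regrouping (or, equivalently, to run the induction on $|\delta|$ and pull the constant coefficient through $D_x^{\hat{\delta}}$) is the correct repair, and it is a repair the paper's own statement and proof need as well.
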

\begin{proof}
  By the standard chain rule and Lemma~\ref{lem:changeorder},
  we have
  \begin{equation}
    \begin{split}
      D_x^{\delta}
      &= \prod_{i=1}^{|\delta|}
      \frac{\partial}{\partial x_{\delta_i}}
      = \prod_{i=1}^{|\delta|}
      \sum_{\delta'=1}^d
      \frac{\partial X_{\delta'}}{\partial x_{\delta_i}}
      \frac{\partial}{\partial X_{\delta'}}
      = \sum_{\delta' \in [1,d]^{|\delta|}}
      \prod_{i=1}^{|\delta|}
      \frac{\partial X_{\delta'_i}}{\partial x_{\delta_i}}
      \frac{\partial}{\partial X_{\delta'_i}} \\
      &= \sum_{\delta' \in [1,d]^{|\delta|}}
      \prod_{k=1}^{|\delta|} \frac{\partial X_{\delta'_k}}{\partial x_{\delta_k}}
      \prod_{i=1}^{|\delta|}
      \frac{\partial}{\partial X_{\delta'_i}}
      = \sum_{\delta' \in [1,d]^{|\delta|}}
      \left
      (\prod_{k=1}^{|\delta|} \frac{\partial X_{\delta'_k}}{\partial x_{\delta_k}}
      \right)
      D_X^{\delta'}.
    \end{split}
  \end{equation}
\end{proof}

We may now prove the following representation theorem\footnote{A
similar representation was derived and presented in
\cite{logg:article:10} but in less formal notation.}  for the element
tensor.
\begin{theorem}[(Representation theorem)]
  If $F_K$ is a given affine mapping from a reference element $K_0$ to
  an element $K$ and $\{V_j^K\}_{j=1}^m$ is a given set of discrete
  function spaces on $K$, each generated by a discrete function space
  on the reference element through the affine mapping, that is, for
  each $\phi \in V_j^K$ there is some $\Phi \in V_j^0$ such that $\Phi
  = \phi \circ F_K$, then the element tensor~\emph{(\ref{eq:canonical})} may
  be represented as the tensor contraction of a \emph{reference
  tensor} $A^0$ and a \emph{geometry tensor} $G_K$,
  \begin{equation} \label{eq:representation}
    A^K = A^0 : G_K,
  \end{equation}
  that is,
  \begin{equation}
    A^K_i = \sum_{\alpha\in\mathcal{A}} A^0_{i\alpha} G_K^{\alpha}
    \quad \forall i \in \mathcal{I},
  \end{equation}
  where the reference tensor $A^0$ is independent of $K$. In
  particular, the reference tensor $A^0$ is given by
  \begin{equation}
    A^0_{i\alpha}
    =
    \sum_{\beta\in\mathcal{B}}
    \int_{K_0}
    \prod_{j=1}^m
    D_X^{\delta'_j(\alpha,\beta)}
    \Phi^j_{\iota_j(i,\alpha,\beta)}[\kappa_j(\alpha,\beta)]
    \dX,
  \end{equation}
  and the geometry tensor $G_K$ is the outer product of the
  coefficients of any weight functions with a tensor that depends only
  on the Jacobian $F_K$,
  \begin{equation}
    G_K^{\alpha}
    =
    \prod_{j=1}^m
    c_{j}(\alpha) \,
    \det F_K'
    \sum_{\beta\in\mathcal{B'}}
    \prod_{j'=1}^m
    \prod_{k=1}^{|\delta_{j'}(\alpha,\beta)|}
    \frac{\partial X_{\delta'_{j'k}(\alpha,\beta)}}{\partial x_{\delta_{j'k}(\alpha,\beta)}},
  \end{equation}
  for some appropriate index sets $\mathcal{A}$, $\mathcal{B}$ and
  $\mathcal{B}'$. We refer the the index set~$\mathcal{I}$ as
  the set of \emph{primary indices}, the index set~$\mathcal{A}$ as
  the set of \emph{secondary indices}, and to the index sets~$\mathcal{B}$ and
  $\mathcal{B}'$ as \emph{auxiliary indices}.
\end{theorem}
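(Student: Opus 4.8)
The plan is to start from the canonical form~(\ref{eq:canonical}) and transport every factor back to the reference element $K_0$ through the affine map $x = F_K(X)$, then use the two preceding lemmas to separate the $K$-dependent geometric and coefficient data from the $K$-independent reference integrals. First I would change variables in each integral over $K$. Since $F_K$ is affine, its Jacobian $F_K'$ is constant on $K$, so $\dx = \det F_K' \, \dX$, and by the hypothesis that each $V_j^K$ is generated from a reference space $V_j^0$ through $F_K$, I may replace each $\phi^{K,j}_{\iota_j}[\kappa_j]$ by the corresponding reference basis function $\Phi^j_{\iota_j}[\kappa_j]$ evaluated at $X = F_K^{-1}(x)$.

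Next I would apply the chain rule of Lemma~\ref{lem:chainrule} to each differentiated factor $D_x^{\delta_j(\gamma)}\phi^{K,j}$, rewriting it as a sum over an auxiliary derivative multiindex $\delta'_j \in [1,d]^{|\delta_j(\gamma)|}$ of the product of the Jacobian entries $\prod_k \partial X_{\delta'_{jk}}/\partial x_{\delta_{jk}}$ with the reference derivative $D_X^{\delta'_j}\Phi^j$. This turns the $j$th factor into a sum, so the integrand becomes a product over $j$ of such sums. I would then invoke Lemma~\ref{lem:changeorder} to interchange the product over $j$ with these $m$ inner sums, collapsing them into a single sum over the combined multiindex $\delta' = (\delta'_1,\ldots,\delta'_m)$ ranging over the corresponding product index set.

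Because $F_K$ is affine, every Jacobian entry and the factor $\det F_K'$ is constant on $K$ and therefore pulls outside the integral over $K_0$, as do the coefficients $c_j(\gamma)$. What remains inside the integral is exactly $\prod_{j=1}^m D_X^{\delta'_j}\Phi^j_{\iota_j}[\kappa_j]$, which depends only on the reference element and on the indices, not on $K$. It then remains to organize the summation indices into the three classes named in the statement: the contracted (secondary) index $\alpha$ must carry precisely those indices shared between the two tensors, namely the component and coefficient indices drawn from $\mathcal{C}$ together with the chain-rule derivative multiindex $\delta'$, while the auxiliary sets $\mathcal{B}$ and $\mathcal{B}'$ collect any indices surviving in only one of the two factors, which may therefore be summed independently inside $A^0$ and inside $G_K$. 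Reading off the two groups of factors yields the reference tensor $A^0_{i\alpha}$ and the geometry tensor $G_K^{\alpha}$ exactly as stated, and the identity $A^K = A^0 : G_K$ follows.

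I expect the main obstacle to lie in this last bookkeeping step rather than in any analytic difficulty. The delicate point is verifying that $\delta'$ genuinely belongs to the contracted index $\alpha$, so that the reference derivative taken in $A^0$ stays matched with the corresponding Jacobian entries appearing in $G_K$; if $\delta'$ were instead absorbed into an auxiliary set, the two tensors would be summed over it independently and the coupling produced by the chain rule would be lost. Getting this partition of indices right, and correctly identifying which remaining indices may be relegated to $\mathcal{B}$ and $\mathcal{B}'$, is what makes the clean factorization into a tensor contraction valid.
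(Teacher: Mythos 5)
Your proposal is correct and takes essentially the same route as the paper's own proof: change of variables under the affine map, Lemma~\ref{lem:chainrule} applied to each factor, Lemma~\ref{lem:changeorder} to collapse the product of sums into a sum over the combined multiindex $\delta'$, pulling the constant Jacobian entries, determinant and coefficients out of the integral, and finally partitioning the summation indices into $\mathcal{A}$, $\mathcal{B}$ and $\mathcal{B}'$. The bookkeeping point you flag --- that $\delta'$ must be carried by the contracted index $\alpha$ because it couples the reference derivatives inside the integral to the Jacobian factors outside --- is precisely the index rearrangement step performed in the paper.
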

\begin{proof}
  Starting from~(\ref{eq:canonical}), we may move the product of
  constant expansion coefficients outside of the integral to obtain
  \begin{equation} \label{eq:proof,1}
    \begin{split}
      A^K_i
      &=
      \sum_{\gamma\in\mathcal{C}}
      \int_K
      \prod_{j=1}^m
      c_j(\gamma)
      D_x^{\delta_j(\gamma)}
      \phi^{K,j}_{\iota_j(i,\gamma)}[\kappa_j(\gamma)] \dx \\
      &=
      \sum_{\gamma\in\mathcal{C}}
      \prod_{j'=1}^m
      c_{j'}(\gamma)
      \int_K
      \prod_{j=1}^m
      D_x^{\delta_j(\gamma)}
      \phi^{K,j}_{\iota_j(i,\gamma)}[\kappa_j(\gamma)] \dx.
    \end{split}
  \end{equation}
  We now make a change of variables through $F_K$,
  mapping coordinates $X \in K_0$ to coordinates $x = F_K(X) \in K$,
  to carry out the integration on the reference element~$K_0$.
  By Lemma~\ref{lem:chainrule}, we thus obtain
  \begin{equation}
    \begin{split}
      A^K_i
      &=
      \sum_{\gamma\in\mathcal{C}}
      \prod_{j'=1}^m
      c_{j'}(\gamma)
      \int_{K_0}
      \prod_{j=1}^m
      \sum_{\delta' \in [1,d]^{|\delta_j|}}
      \prod_{k=1}^{|\delta_j|} \frac{\partial X_{\delta'_k}}{\partial x_{\delta_{jk}}}
      \quad\times \\
      & \quad \times \quad
      D_X^{\delta'} \Phi^j_{\iota_j(i,\gamma)}[\kappa_j(\gamma)]\
      \det F_K' \dX \\
      &=
      \sum_{\gamma\in\mathcal{C}}
      \sum_{\delta' \in \prod_{l=1}^m [1,d]^{|\delta_l|}}
      \prod_{j'=1}^m
      c_{j'}(\gamma)
      \int_{K_0}
      \prod_{j=1}^m
      \prod_{k=1}^{|\delta_j|} \frac{\partial X_{\delta'_{jk}}}{\partial x_{\delta_{jk}}}
      \quad\times \\
      & \quad \times \quad
      D_X^{\delta'_j} \Phi^j_{\iota_j(i,\gamma)}[\kappa_j(\gamma)]\
      \det F_K' \dX,
    \end{split}
  \end{equation}
  where we have also used Lemma~\ref{lem:changeorder} to change the
  order of multiplication and summation. Now, since the mapping~$F_K$
  is affine, the transforms $\frac{\partial X}{\partial x}$ and the
  determinant are constant and may thus be pulled out of the integral.
  As a consequence, we obtain
  \begin{equation}
    \begin{split}
      A^K_i
      &=
      \sum_{\gamma\in\mathcal{C}}
      \sum_{\delta' \in \prod_{l=1}^m [1,d]^{|\delta_l|}}
      \prod_{j'=1}^m
      c_{j'}(\gamma)
      \det F_K'
      \prod_{j''=1}^m
      \prod_{k=1}^{|\delta_{j''}|} \frac{\partial X_{\delta'_{j''k}}}{\partial x_{\delta_{j''k}}}
      \quad\times \\
      & \quad \times \quad
      \int_{K_0}
      \prod_{j=1}^m
      D_X^{\delta'_j} \Phi^j_{\iota_j(i,\gamma)}[\kappa_j(\gamma)] \dX.
    \end{split}
  \end{equation}
  The summation over $\mathcal{C}$ and $\prod_{l=1}^m
  [1,d]^{|\delta_l|}$ may now be rearranged as a summation over an
  index set $\mathcal{B}$ local to the terms of the integrand, a
  summation over an index set $\mathcal{B}'$ local to the terms
  outside of the integral, and a summation over an index set
  $\mathcal{A}$ common to all terms.
  We may thus express the element
  tensor $A^K$ as the tensor contraction
  \begin{equation}
    A^K_i = \sum_{\alpha \in \mathcal{A}} A^0_{i\alpha} G_K^{\alpha},
  \end{equation}
  where
  \begin{equation}
    \begin{split}
      A^0_{i\alpha}
      &=
      \sum_{\beta\in\mathcal{B}}
      \int_{K_0}
      \prod_{j=1}^m
      D_X^{\delta'_j(\alpha,\beta)}
      \Phi^j_{\iota_j(i,\alpha,\beta)}[\kappa_j(\alpha,\beta)]
      \dX, \\
      G_K^{\alpha}
      &=
      \prod_{j=1}^m
      c_{j}(\alpha) \,
      \det F_K'
      \sum_{\beta\in\mathcal{B'}}
      \prod_{j'=1}^m
      \prod_{k=1}^{|\delta_{j'}(\alpha,\beta)|} \frac{\partial X_{\delta'_{j'k}(\alpha,\beta)}}{\partial x_{\delta_{j'k}(\alpha,\beta)}}.
    \end{split}
  \end{equation}
  Note that since each coefficient $c_{j}(\alpha)$ in the geometry
  tensor~$G_K$ is always paired with a corresponding basis function in
  the reference tensor~$A^0$, we were able to reorder the summation to
  move the coefficients outside of the summation over~$\mathcal{B}'$.
\end{proof}

As demonstrated earlier in~\cite{logg:article:10},
the representation~(\ref{eq:representation}) in combination with
precomputation of the reference tensor $A^0$ may lead to very
efficient computation of the element tensor~$A^K$, with typical
run-time speedups ranging between a factor~$10$ and a factor~$1000$
compared to standard run-time evaluation of the element tensor by
numerical quadrature. The speedup is a direct result of the reduced
operation count for the computation of the element tensor based on the
tensor representation. In addition, one may omit multiplication with
zeros and detect symmetries or other dependencies between the entries
of the element tensor to further reduce the operation count as
discussed in~\cite{logg:article:07,logg:article:09}.

The rank of the reference tensor is determined both by the arity of
the multilinear form and how the form is expressed as a product of
coefficients and derivatives of basis functions. In general, the rank
of the reference tensor is $|i| + |\alpha|$, where $|i| = r$ is the
arity of the form and $|\alpha|$ is the rank of the geometry
tensor. As a rule of thumb, the rank of the geometry tensor is the
sum of the number of coefficients $n_C$ and the number of derivatives
$n_D$ appearing in the definition of the form, and thus the rank of
the reference tensor for a bilinear form is~$2 + n_C + n_D$.  Examples
are given below in Section~\ref{sec:benchmarks} for a set of test
cases.

\subsection{Run-time evaluation of the tensor contraction}

This framework also maps onto using matrix-vector or matrix-matrix
products at run-time.  We may recast the tensor
contraction~(\ref{eq:representation}) as a matrix-vector product for
each $K$ in the mesh.  This involves first casting $A^0$ as a matrix
by labeling all the items of the index set ${\mathcal I}$ with
integers in $[1,|{\mathcal I}|]$ and the index set ${\mathcal A}$ with
integers in $[1,|{\mathcal A}|]$. This ordering of the multiindices $i
\in \mathcal{I}$ corresponds to the rows of the matrix and the
ordering of the multiindices $\alpha \in \mathcal{A}$ corresponds to
the columns.  The same ordering is imposed on $G_K$ to make it a
vector. Furthermore, we may take a batch of elements $\mathcal{T}'
\subset \mathcal{T}$ and compute $\{A^{K} \}_{K \in \mathcal{T}'}$
with a matrix-matrix product. Currently, FFC supports code generation
that sets up the matrix-vector products via level 2 BLAS, and
computing batches of elements with matrix-matrix products via level 3
BLAS will be supported in a future version.

\subsection{Equivalence of reference tensors}

As noted earlier, forming the reference tensor is a dominant part of
the cost for FFC when compiling code for the evaluation of multilinear
forms. Before we proceed to discuss algorithms for efficiently
evaluating the tensor for a monomial term in the next section, we
conclude the discussion of form representation by noting that
particular monomial terms have the same reference tensor but different
geometry tensors. In such cases, the total cost may thus be reduced by
recognizing the common reference tensor and only computing it once.

As an example, consider each term of the two-dimensional Laplacian,
\begin{equation}
A^{K,j}_{i} = \int_{K}
\frac{\partial \phi_{i_1}^{K,1}}{\partial x_j}
\frac{\partial \phi_{i_2}^{K,2}}{\partial x_j}
\dx,
\end{equation}
where $j=1,2$ is the coordinate direction. By a suitable definition of
the index set~$\mathcal{C}$, the sum of both terms may be phrased as a
single canonical form~(\ref{eq:canonical}). We may also consider the
two terms separately and write each term in the canonical
form~(\ref{eq:canonical}). After changing coordinates to the reference
domain, we obtain the reference tensors
\begin{equation}
A^{0,j}_{i\alpha} = \int_{K_0}
\frac{\partial \Phi_{i_1}^{1}}{\partial X_{\alpha_1}}
\frac{\partial \Phi_{i_2}^{2}}{\partial X_{\alpha_2}}
\dX, \quad j=1,2,
\end{equation}
and geometry tensors
\begin{equation}
G^{\alpha}_{K,j} = \det F_K^\prime
\frac{\partial X_{\alpha_1}}{\partial x_j}
\frac{\partial X_{\alpha_2}}{\partial x_j},
\quad j=1,2.
\end{equation}
Note that the two terms of the form indeed have the same reference
tensor but different geometry tensors. This has both compile-time and
run-time implications. At compile-time, FFC should recognize this
structure, hence building the reference tensor only once and
generating code for a single geometry tensor that sums the basic
parts. When the
generated code is executed at run-time, this corresponds to fewer
instructions and hence better performance. We may formalize this as follows.
\begin{theorem}
Consider two multilinear variational forms with corresponding element
tensors $A_i^K$ and $B_i^K$ of the
form~\emph{(\ref{eq:preliminary})} defined over spaces $\{V^{A,K}_j
\}_{j=1}^{r_A}$ and $\{V^{B,K}_j\}_{j=1}^{r_B}$, respectively, that is,
\begin{equation}
  A^K_i = \int_K
  \prod_{j=1}^{r_A} D_x^{\delta^A_j} \phi^{A,K,j}_{\iota^A_j(i)} \dx,
\end{equation}
\begin{equation}
  B^K_i = \int_K
  \prod_{j=1}^{r_B} D_x^{\delta^B_j} \phi^{B,K,j}_{\iota^B_j(i)} \dx.
\end{equation}
Suppose that $r_A = r_B \equiv r$, $V^A_j = V^B_j$, $\iota^A_j =
\iota^B_j$ and $| \delta^A_j| = |\delta^B_j|$ for
$j=1,2,\ldots,r$. Then, the corresponding reference tensors are equal,
that is, $A^0 = B^0$. Moreover, this relationship is an equivalence
relation on the set of variational forms.
\end{theorem}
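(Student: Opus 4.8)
The plan is to prove the two assertions in turn, with essentially all of the work falling on the first. To show $A^0 = B^0$, I would begin by specializing the Representation Theorem to the simple monomial form~(\ref{eq:preliminary}). Such a form carries no variable coefficients and no component indices, so in the notation of that theorem $c_j \equiv 1$, the indices $\kappa_j$ are absent, and the outer index set $\mathcal{C}$ is trivial. Consequently, when I follow the proof of the Representation Theorem, the only summation that appears is the one produced by the chain rule of Lemma~\ref{lem:chainrule}, namely over $\delta' \in \prod_{j=1}^r [1,d]^{|\delta_j|}$, while the auxiliary index sets $\mathcal{B}$ and $\mathcal{B}'$ reduce to singletons. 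This single summation furnishes the secondary index $\alpha = \delta'$, and the reference tensor is
\begin{equation}
  A^0_{i\alpha} = \int_{K_0} \prod_{j=1}^r D_X^{\delta'_j(\alpha)} \Phi^j_{\iota_j(i)} \dX,
\end{equation}
with $\alpha$ ranging over $\mathcal{A} = \prod_{j=1}^r [1,d]^{|\delta_j|}$ and $\delta'_j(\alpha)$ denoting the $j$th block of $\alpha$.

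The crux of the argument is then the observation that the physical derivative directions $\delta_j$ enter the construction \emph{only} through the Jacobian factors $\prod_{k=1}^{|\delta_j|} \frac{\partial X_{\delta'_{jk}}}{\partial x_{\delta_{jk}}}$, all of which are collected into the geometry tensor $G_K$. No value of any $\delta_{jk}$ survives into $A^0$: the reference tensor depends on $\delta_j$ solely through the length $|\delta_j|$, which fixes both the range $[1,d]^{|\delta_j|}$ of $\delta'_j$ and the order of differentiation $D_X^{\delta'_j}$ on $K_0$. Hence $A^0$ is determined by exactly three data, namely the reference basis functions $\Phi^j$ (equivalently the reference spaces $V_j^0$ generating the $V_j^K$), the index maps $\iota_j$, and the lengths $|\delta_j|$. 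Under the hypotheses $r_A = r_B = r$, $V^A_j = V^B_j$, $\iota^A_j = \iota^B_j$ and $|\delta^A_j| = |\delta^B_j|$, these three data agree between the two forms: the reference basis functions coincide (the equal spaces $V^A_j = V^B_j$ are generated by a common reference space $V_j^0$, and $\iota^A_j = \iota^B_j$ selects the same functions), the index sets $\mathcal{A}$ coincide, and the integrands match factor by factor. Therefore $A^0_{i\alpha} = B^0_{i\alpha}$ for all $i$ and $\alpha$, which is the claim $A^0 = B^0$.

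For the second assertion, I would write $A \sim B$ for the relation asserting that all four hypotheses hold simultaneously. The point to make is that $\sim$ is the conjunction of the pullbacks of the equality relation along the maps that send a form to its arity $r$, to each space $V_j$, to each index map $\iota_j$, and to each length $|\delta_j|$. Since equality is an equivalence relation and an intersection of equivalence relations is again one, $\sim$ is an equivalence relation. Spelled out directly, reflexivity is immediate, symmetry holds because every defining clause is an equality and hence symmetric, and transitivity follows by chaining equalities, where $r_A = r_B = r_C$ ensures that the per-$j$ clauses are compared over a common index range.

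The substantive content lies wholly in the first assertion, and the single delicate point is the claim that the reference tensor is blind to the actual derivative directions. I expect this to be the main obstacle only as a matter of exposition: one must make visible, by tracking the geometry/reference split inside the Representation Theorem, that every occurrence of a physical index $\delta_{jk}$ is confined to a factor $\frac{\partial X}{\partial x}$ and therefore resides in $G_K$ and never in $A^0$. Once this is made explicit, both the equality of reference tensors and the equivalence-relation property follow with no further computation.
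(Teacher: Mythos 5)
Your proof is correct and follows exactly the route the paper intends: the paper states this theorem without an explicit proof (relying on the preceding Laplacian example and the Representation Theorem), and your argument makes that implicit reasoning precise by specializing the representation to the monomial form~(\ref{eq:preliminary}) and observing that the physical derivative directions $\delta_{jk}$ survive only in the geometry tensor, so $A^0$ is determined by the reference spaces, the index maps $\iota_j$, and the lengths $|\delta_j|$ alone. Your treatment of the equivalence-relation claim as a conjunction of equality relations is likewise correct and matches the (unstated) intent of the paper.
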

We remark that this result can be generalized
slightly. If permuting the integrands of one form, say $B$, would lead
to the hypotheses of this theorem being satisfied, then $A^0$ and
$B^0$ are the same after a similar permutation of the axes of $B^0$.

FFC recognizes and factors out common reference tensors by computing
for each term of a given multilinear form a string that uniquely
identifies the term. This may be accomplished by simply concatenating
the names of the finite elements that generate the function spaces for
the basis functions in the term, together with derivatives and
component indices. We refer to such a string as a
(hard)~\emph{signature} and note that the signature may be computed
cheaply for each term by just looking at its canonical
representation. We may then factor out common reference tensors by
checking for equality of signatures. If two terms have the same
signature, they also have a common reference tensor that may be
factored out.

FFC also computes a \emph{soft signature} for each term, which is
similar to a \emph{hard signature} but disregarding ordering of
multiindices. By checking for equality of soft signatures, it is
possible to find terms which have reference tensors that only differ
by the ordering of their axes. If two soft signatures match but the
corresponding hard signatures differ, it is possible to find a
reordering that results in equal hard signatures. FFC thus computes
for each term a soft signature and if the soft signatures match for
two terms, a suitable reordering is found, and the reference tensor
may be factored out as before. In Table~\ref{tab:sig}, we include the
hard and soft signatures for the bilinear form for Poisson's equation,
$a(v, u) = \int_{\Omega} \nabla v \cdot \nabla u \dx$.

\begin{table}[htbp]
  \begin{center}
    \footnotesize
    \begin{code}
    1.000000000000000e+00*
    {Lagrange finite element of degree 1 on a triangle;i0;[];[(d/dXa0)]}*
    {Lagrange finite element of degree 1 on a triangle;i1;[];[(d/dXa1)]}*dX

    1.000000000000000e+00*
    {Lagrange finite element of degree 1 on a triangle;i0;[];[(d/dXa)]}*
    {Lagrange finite element of degree 1 on a triangle;i1;[];[(d/dXa)]}*dX
    \end{code}
    \normalsize
    \caption{Hard signature (top) and soft signature (bottom) for the reference tensor of the
      bilinear form $a(v, u) = \int_{\Omega} \nabla v \cdot \nabla
      \dx$ with piecewise linear elements on triangles. Note that
      there are no line breaks in the signatures.}
    \label{tab:sig}
  \end{center}
\end{table}

\section{Computing the reference tensor}
\label{sec:algorithm}

Given a multilinear variational form, the form compiler FFC
automatically generates the canonical form~(\ref{eq:canonical}) and
the representation~(\ref{eq:representation}). The computationally most
expensive part of this process is the computation of the reference
tensor~$A^0$, that is, the tabulation of each integral
\begin{equation}
  A^0_{i\alpha}
  =
  \sum_{\beta\in\mathcal{B}}
  \int_{K_0}
  \prod_{j=1}^m
  D_X^{\delta'_j(\alpha,\beta)}
  \Phi^j_{\iota_j(i,\alpha,\beta)}[\kappa_j(\alpha,\beta)]
  \dX,
\end{equation}
for $i\in\mathcal{I}$ and $\alpha\in\mathcal{A}$.

As an example, consider the bilinear form
\begin{equation} \label{eq:navierstokes,example}
  a(v, u) = \int_{\Omega} v \cdot (w \cdot \nabla) u \dx,
\end{equation}
appearing in a linearization of the incompressible Navier--Stokes
equations (see Section~\ref{sec:benchmarks} below). Computing the $12
\times 12 \times 12 \times 3 \times 3 = 15,552$ entries of the
rank~five reference tensor~$A^0$ for piecewise linear elements on
tetrahedra takes about $9.5$~seconds on a 3.0GHz Pentium~4 with FFC
version~0.2.2. Since this computation only needs to be done once at
compile-time, one may argue that this is no big issue.  However,
limitations on computer resources can be a limit to the complexity of
the forms we can compile and the degree of polynomials we can
use. Furthermore, long turn-around times to compile new, complex
models diminish the usefulness of FFC as a tool for truly rapid
development.

We present below two very different ways to compute the reference
tensor, first the obvious naive approach used in FFC version~0.2.2 and
earlier versions, and then a more efficient algorithm used in FFC
version~0.2.5 and beyond, which cuts the cost of computing the
reference tensor by several orders of magnitude. In the case of
the form~(\ref{eq:navierstokes,example}) for piecewise linear elements on
tetrahedra, the cost of computing the reference tensor is reduced from
$9.5$~seconds to around $0.02$~seconds.

\subsection{Iterating over the entries of the reference tensor}

The obvious way to compute the reference tensor is to iterate over all
indices of the reference tensor and compute each entry by quadrature
over a suitable set of quadrature points $\{X_k\}_{k=1}^{N_q}$ and a
corresponding set of quadrature weights $\{w_k\}_{k=1}^{N_q}$ on the
reference element $K_0$, as outlined in Algorithm~\ref{alg:iterating}.
Note that the iteration over multiindices $\alpha$ and $\beta$ are
themselves multiply nested loops, however the length of $\alpha,\beta$
and hence the depth of the loop nest depends on the form being
compiled. FFC uses the ``collapsed-coordinate'' Gauss-Jacobi rules
described in~\cite{KarShe99} based on taking tensor products of
Gaussian integration rules over the square and cube and mapping them
to the reference simplex. These are the arbitrary-order rules provided
by FIAT. Since we are integrating polynomials, we may pick a
quadrature rule which is exact for the total polynomial degree of the
integrand.  Alternatively, we can pick an approximate rule that is
sufficiently accurate as per the theory of variational
crimes~\cite{Cia76,BreSco94}.

\begin{algorithm}
  \begin{tabbing}
    \textbf{for}  {$i \in \mathcal{I}$}\\
    \tab \textbf{for} $\alpha \in \mathcal{A}$ \\
    \tab \tab $I = 0$ \\
    \tab \tab \textbf{for} {$\beta \in \mathcal{B}$} \\
    \tab \tab \tab \textbf{for} {$k = 1,2,\ldots,N_q$} \\
    \tab \tab \tab \tab {$I = I +
      w_k \prod_{j=1}^m
      D_X^{\delta'_j(\alpha,\beta)}
      \Phi^j_{\iota_j(i,\alpha,\beta)}[\kappa_j(\alpha,\beta)](X_k)$} \\
    \tab \tab \tab \textbf{end for} \\
    \tab \tab \textbf{end for} \\
    \tab \tab {$A^0_{i\alpha} = I$} \\
    \tab \textbf{end for} \\
    \textbf{end for}
  \end{tabbing}
  \caption{$A^0$ = ComputeReferenceTensor()}
  \label{alg:iterating}
\end{algorithm}

Algorithm~\ref{alg:iterating} is expressed at a very low granularity
--- a loop over quadrature points for each entry of the reference
tensor.  The interpretive overhead associated with this algorithm
explains the poor performance of earlier versions of FFC in a language
such as Python.  However, we may express the computation at a much
higher level of granularity and leverage optimized libraries written
in C, such as Python {\tt Numeric}~\cite{www:Numeric}. In fact, we wind up with a loop
over auxiliary indices and quadrature points, inside which the entire
reference tensor is updated by an extended outer product.  This higher
abstraction dramatically improves performance while allowing us to
remain in a high-level language.

\subsection{Assembling the reference tensor}

Algorithm~\ref{alg:iterating} may be reorganized to significantly
improve the performance. By first tabulating the basis functions at
all quadrature points according to
\begin{equation}
  \Psi^j_{k\beta,i\alpha} =
  D_X^{\delta'_j(\alpha,\beta)}
  \Phi^j_{\iota_j(i,\alpha,\beta)}[\kappa_j(\alpha,\beta)](X_k),
\end{equation}
which may be done efficiently using FIAT, we may
improve the granularity of the computation by iterating over
quadrature points $\{X_k\}_{k=1}^{N_q}$ and auxiliary
indices~$\mathcal{B}$, assembling the contributions to the reference
tensor from each pair~$(x_k, \beta)$, as outlined in
Algorithm~\ref{alg:assembling} and Algorithm~\ref{alg:product}.

\begin{algorithm}
  \begin{tabbing}
    \textbf{for} {$j = 1,2,\ldots,m$} \\
    \tab $\Psi^j = $ Tabulate($V_j^0$, $\{X_k\}_{k=1}^{N_q}$,
    $\mathcal{I}$, $\mathcal{A}$, $\mathcal{B}$,
    $\iota_j$, $\delta'_j$, $\kappa_j$) \\
    \textbf{end for} \\
    $A^0 = 0$ \\
    \textbf{for} {$k = 1,2,\ldots,N_q$} \\
    \tab \textbf{for} {$\beta \in \mathcal{B}$} \\
    \tab \tab $A^0 = A^0 \,\, +$ ComputeProduct($\{\Psi^j\}_{j=1}^m$, $k$, $\beta$) \\
    \tab \textbf{end for} \\
    \textbf{end for}
  \end{tabbing}
  \caption{$A^0$ = AssembleReferenceTensor()}
  \label{alg:assembling}
\end{algorithm}

\begin{algorithm}
  \begin{tabbing}
    $B = w_k$ \\
    \textbf{for} {$j = 1,2,\ldots,m$} \\
    \tab $B = B \otimes \Psi^j_{k\beta}$ \tab (\emph{outer product})\\
    \textbf{end for}
  \end{tabbing}
  \caption{$B$ = ComputeProduct($\{\Psi^j\}_{j=1}^m$, $k$, $\beta$)}
  \label{alg:product}
\end{algorithm}

The higher level of abstraction of Algorithm~\ref{alg:assembling}
allows us to simultaneously reduce the interpretive overhead and make
use of optimized libraries, such as the Python {\tt Numeric}
extension module.  The accumulation of the outer
products may be accomplished with the \texttt{Numeric.add} function,
which is implemented in terms of efficient C loops over the low-level
arrays.  Moreover, the sequence of outer products is accumulated
through calls to the function \texttt{Numeric.multiply.outer}. A
sketch of the Python code corresponding to
Algorithm~\ref{alg:assembling} and Algorithm~\ref{alg:product} is included in
Table~\ref{tab:assembling,python}. The full code can be downloaded
from the FFC web page~\cite{logg:www:04}.

\begin{table}[htbp]
  \normalsize
  \begin{center}
    \begin{code}
    # Iterate over quadrature points
    for k in range(num_points):
        # Iterate over secondary indices
        for beta in B:
            # Compute cumulative outer product
            P = w[k]
            for j in range(m):
                P = Numeric.multiply.outer(P, Psi[...])
            # Add to reference tensor
            Numeric.add(A0, P, A0)
    \end{code}
    \caption{A sketch of the Python implementation of
    Algorithm~\ref{alg:assembling} and Algorithm~\ref{alg:product} in FFC.}
    \label{tab:assembling,python}
  \end{center}
\end{table}

The situation is similar to that of the assembly of a global sparse
matrix for a variational form; by separating the concerns of
computing the local contribution (the element tensor) from the
insertion of the local contribution into the global sparse matrix, we
may optimize the two steps independently. In the former case, we call
the optimized code generated by FFC to compute the element tensor and
in the latter case, we may use an optimized library call such as the
PETSc~\cite{www:PETSc,BalBus04,BalGro97} call \texttt{MatSetValues()}.

Moreover, a closer investigation of Algorithm~\ref{alg:iterating} also
reveals a source of redundant computation.  As one entry in the
multiindex $\alpha$ changes, most of the factors of the product in the
innermost loop remain the same.  This problem grows worse as the arity
of the form and the number of derivatives increase.  Although this is
logically equivalent to a multiply nested loop, the structure of
looping over an enumeration of multiindices makes it highly unlikely
that an optimizing compiler would hoist invariants.  However,
Algorithm~\ref{alg:product} includes the hoisting out of the
arbitrary-depth loop nest.

\section{Benchmark results}
\label{sec:benchmarks}

To measure the efficiency of the proposed
Algorithm~\ref{alg:assembling}, we compute the reference tensor for a
series of test cases, comparing FFC version~0.2.2, which is based on
Algorithm~\ref{alg:iterating}, with FFC version~0.2.5, which is based
on Algorithm~\ref{alg:assembling}.

The benchmarks were obtained on an Intel Pentium 4 (3.0~GHz CPU, 2GB
RAM) running Debian GNU/Linux with Python~2.4, Python Numeric~24.2-1
and FIAT~0.2.3. The numbers reported are the CPU times/second for the
precomputation of the reference tensor, which was previously the main
bottle-neck in the compilation of a form. With the new and more
efficient precomputation of the reference tensor in FFC, the
precomputation is no longer a bottle-neck and is in some cases
dominated by the cost of code generation.

\subsection{Test cases}
\label{sec:testcases}

We take as test cases the computation of the reference tensor for the
set of bilinear forms used to benchmark the run-time performance of
the code generated by FFC in~\cite{logg:article:10}. For convenience,
we choose a common discrete function space $V_1 = V_2 = \ldots = V_n =
V$ for all basis functions, but there is no such limitation in FFC;
function spaces can be mixed freely.

We also add a fifth test case which is a more demanding problem posing
real difficulties for earlier versions of FFC based on
Algorithm~\ref{alg:iterating}.

\subsubsection*{Test case 1: the mass matrix}

As a first test case, we consider the computation of the mass matrix
$M$ with $M_{i_1i_2} = a(\phi^1_{i_1}, \phi^2_{i_2})$ and the bilinear
form $a$ given by
\begin{equation} \label{eq:mass}
  a(v, u) = \int_{\Omega} v u \dx.
\end{equation}
The corresponding rank~two reference tensor takes the form
\begin{equation}
  A^0_{i} = \int_{K_0} \Phi_{i_1} \Phi_{i_2} \dX,
\end{equation}
with the rank~zero geometry tensor given by $G_K = \det F_K'$.

\subsubsection*{Test case 2: Poisson's equation}

As a second example, consider the bilinear form for Poisson's equation,
\begin{equation} \label{eq:poisson}
  a(v, u) = \int_{\Omega} \nabla v \cdot \nabla u \dx.
\end{equation}
The corresponding rank~four reference tensor takes the form
\begin{equation}
  A^0_{i\alpha} = \int_{K_0}
  \frac{\partial \Phi_{i_1}}{\partial X_{\alpha_1}}
  \frac{\partial \Phi_{i_2}}{\partial X_{\alpha_2}} \dX,
\end{equation}
with the rank~two geometry tensor given by
$G_K^{\alpha} = \det F_K' \sum_{\beta=1}^d
\frac{\partial X_{\alpha_1}}{\partial x_{\beta}}
\frac{\partial X_{\alpha_2}}{\partial x_{\beta}}$.

\subsubsection*{Test case 3: Navier--Stokes}

We consider next the nonlinear term $u \cdot \nabla u$ of the
incompressible Navier--Stokes equations,
\begin{equation}
  \begin{split}
    \dot{u} + u \cdot \nabla u - \nu \Delta u + \nabla p &= f, \\
    \nabla \cdot u &= 0.
  \end{split}
\end{equation}
Linearizing this term as part of either a Newton or fixed-point based
solution method (see for example~\cite{EriEst03c,HofJoh04b}),we need
to evaluate the bilinear form
\begin{equation}
  a(v, u) = a_w(v, u) = \int_{\Omega} v \cdot (w \cdot \nabla) u \dx.
\end{equation}
The corresponding rank~five reference tensor takes the form
\begin{equation}
  A^0_{i\alpha} =
  \sum_{\beta=1}^d
  \int_{K_0}
  \Phi_{i_1}[\beta] \Phi_{\alpha_1}[\alpha_2]
  \frac{\partial \Phi_{i_2}[\beta]}{\partial X_{\alpha_3}}
  \dX,
\end{equation}
with the rank~three geometry tensor given by
$G_K^{\alpha} = \det F_K' w^K_{\alpha_1}
\frac{\partial X_{\alpha_3}}{\partial x_{\alpha_2}}$.

\subsubsection*{Test case 4: linear elasticity}

As our next test case, we consider the strain-strain term of linear
elasticity~\cite{BreSco94},
\begin{equation}
  \begin{split}
    a(v, u) &= \int_{\Omega} \frac{1}{4}
    (\nabla v + (\nabla v)^{\top}) : (\nabla u + (\nabla u)^{\top}) \dx \\
    &= \int_{\Omega}
    \sum_{i,j=1}^d
    \frac{1}{4}
    \frac{\partial v_i}{\partial x_j}
    \frac{\partial u_i}{\partial x_j} +
    \frac{1}{4}
    \frac{\partial v_i}{\partial x_j}
    \frac{\partial u_j}{\partial x_i} +
    \frac{1}{4}
    \frac{\partial v_j}{\partial x_i}
    \frac{\partial u_i}{\partial x_j} +
    \frac{1}{4}
    \frac{\partial v_j}{\partial x_i}
    \frac{\partial u_j}{\partial x_i} \dx \\
    &= \int_{\Omega}
    \sum_{i,j=1}^d
    \frac{1}{2}
    \frac{\partial v_i}{\partial x_j}
    \frac{\partial u_i}{\partial x_j} +
    \frac{1}{2}
    \frac{\partial v_i}{\partial x_j}
    \frac{\partial u_j}{\partial x_i} \dx.
  \end{split}
\end{equation}
Considering here for simplicity only the first of the
two terms\footnote{The benchmark
measures the time to compute the reference tensors
for both terms.}, the rank~four reference tensor takes
the form
\begin{equation}
  A^0_{i\alpha} =
  \sum_{\beta=1}^d
  \int_{K_0}
  \frac{\partial \Phi_{i_1}[\beta]}{\partial X_{\alpha_1}}
  \frac{\partial \Phi_{i_2}[\beta]}{\partial X_{\alpha_2}}
  \dX,
\end{equation}
with the rank~two geometry tensor given by
$G_{K}^{\alpha} =
\frac{1}{2}
\det F_K'
\sum_{\beta=1}^d
\frac{\partial X_{\alpha_1}}{\partial x_{\beta}}
\frac{\partial X_{\alpha_2}}{\partial x_{\beta}}$.

\subsubsection*{Test case 5: stabilization}

As a final test case, we consider the bilinear form for a
stabilization term appearing in a least-squares stabilized
$\mathrm{cG}(1)\mathrm{cG}(1)$ method for the incompressible
Navier--Stokes equations~\cite{EriEst03c,HofJoh04b}:
\begin{equation}
  a(v, u) = \int_{\Omega}
  (w \cdot \nabla v) \cdot
  (w \cdot \nabla u) \dx
  =
  \sum_{i,j,k=1}^d
  w[j] \frac{\partial v[i]}{\partial x_j}
  w[k] \frac{\partial u[i]}{\partial x_k} \dx.
\end{equation}
The corresponding rank~eight reference tensor takes the form
\begin{equation}
  A^0_{i\alpha} =
  \sum_{\beta=1}^d
  \int_{K_0}
  \Phi_{\alpha_1}[\alpha_3]
  \frac{\partial \Phi_{i_1}[\beta]}{\partial X_{\alpha_5}}
  \Phi_{\alpha_2}[\alpha_4]
  \frac{\partial \Phi_{i_2}[\beta]}{\partial X_{\alpha_6}}
  \dX,
\end{equation}
with the rank~six geometry tensor given by
$G_K^{\alpha} =
w^K_{\alpha_1} w^K_{\alpha_2} \,
\det F_K'
\frac{\partial X_{\alpha_5}}{\partial x_{\alpha_3}}
\frac{\partial X_{\alpha_6}}{\partial x_{\alpha_4}}$.
As a consequence of the high rank of the reference tensor, the
computation of the reference tensor is very costly. For piecewise
linear basis functions on tetrahedra with $4 \times 3 = 12$ basis
functions on the reference element, the number of entries in the
reference tensor is
$12 \times 12 \times 12 \times 12 \times 3 \times 3 \times 3 \times 3
= 1,679,616$.

\subsection{Results}

In Table~\ref{tab:speedup}, we present a summary of the speedups
obtained with Algorithm~\ref{alg:assembling} (FFC version~0.2.5)
compared to Algorithm~\ref{alg:iterating} (FFC version~0.2.2).
Detailed results are given in
Figures~\ref{fig:result,1}--\ref{fig:result,4} for test cases~1--4.
Because of limitations in the earlier version of FFC, that is,
the poor performance of Algorithm~\ref{alg:iterating}, the comparison
is made for polynomial degree $q \leq 8$ in test cases~1--2, $q \leq 3$
in test cases~3--4 and $q = 1$ in test case~5. Higher degree forms may
be compiled with FFC version~0.2.5, but even then the memory
requirements for storing the reference tensor may in some cases exceed
the available 2GB on the test system.

As evident from Table~\ref{tab:speedup}, the speedup is significant
in most test cases, typically one or two orders of magnitude.
In test case~5, the stabilization term in Navier-Stokes, the speedup
is as large as three orders of magnitude.

\begin{table}[htbp]
  \begin{center}
    \footnotesize
    \begin{tabular}{|l|r|r|r|r|r|r|r|r|}
      \hline
      Test case & $q = 1$ & $q = 2$ & $q = 3$ & $q = 4$ & $q = 5$ & $q = 6$ & $q = 7$ & $q = 8$ \\
      \hline
      \hline
      1. Mass matrix 2D & 1.4 & 2.6 & 4.0 & 5.6 & 7.6 & 9.9 & 12.5 & 15.2 \\
      1. Mass matrix 3D & 1.6 & 3.5 & 6.4 & 10.8 & 16.9 & 23.1 & 28.3 & 20.9 \\
      2. Poisson 2D & 2.5 & 7.0 & 11.4 & 16.4 & 21.9 & 27.5 & 33.5 & 39.4 \\
      2. Poisson 3D & 7.4 & 19.3 & 33.8 & 47.8 & 43.8 & 38.8 & 28.1 & 23.1 \\
      3. Navier--Stokes 2D & 67.2 & 264.3 & 239.0 & --- & --- & --- & --- & --- \\
      3. Navier--Stokes 3D & 461.3 & 291.7 & 82.3 & --- & --- & --- & --- & --- \\
      4. Elasticity 2D & 20.2 & 44.3 & 68.9 & --- & --- & --- & --- & --- \\
      4. Elasticity 3D & 142.5 & 230.7 & 138.0 & --- & --- & --- & --- & --- \\
      5. Stabilization 2D & 1114.7 & --- & --- & --- & --- & --- & --- & --- \\
      5. Stabilization 3D & 1101.4 & --- & --- & --- & --- & --- & --- & --- \\
      \hline
    \end{tabular}
    \normalsize
    \caption{Speedups for test cases 1--5 in 2D and 3D for different polynomial degrees~$q$ of Lagrange basis functions.}
    \label{tab:speedup}
  \end{center}
\end{table}

\begin{figure}[htbp]
  \begin{center}
    \psfrag{q}{$q$}
    \includegraphics[width=11cm]{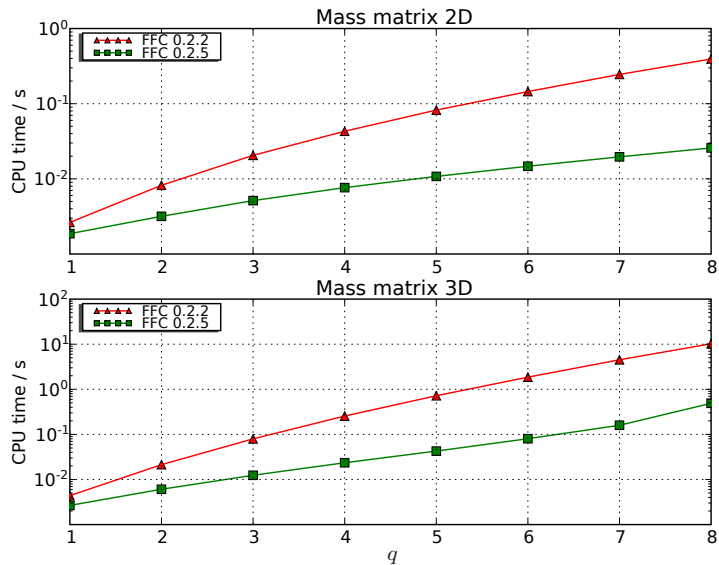}
    \caption{Compilation time as function of polynomial degree~$q$ for test case~1, the mass matrix,
      specified in FFC by \texttt{a = v*u*dx}.}
    \label{fig:result,1}
  \end{center}
\end{figure}

\begin{figure}[htbp]
  \begin{center}
    \psfrag{q}{$q$}
    \includegraphics[width=11cm]{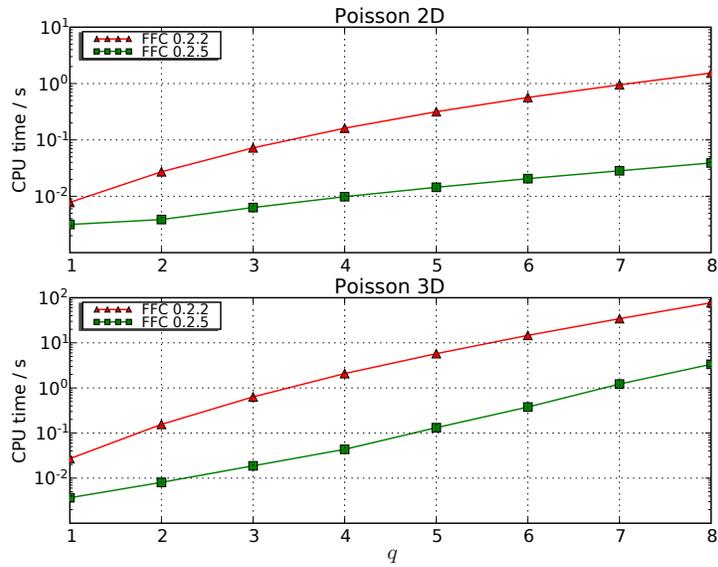}
    \caption{Compilation time as function of polynomial degree~$q$ for test case~2, Poisson's equation,
      specified in FFC by \texttt{a = v.dx(i)*u.dx(i)*dx}.}
    \label{fig:result,2}
  \end{center}
\end{figure}

\begin{figure}[htbp]
  \begin{center}
    \psfrag{q}{$q$}
    \includegraphics[width=11cm]{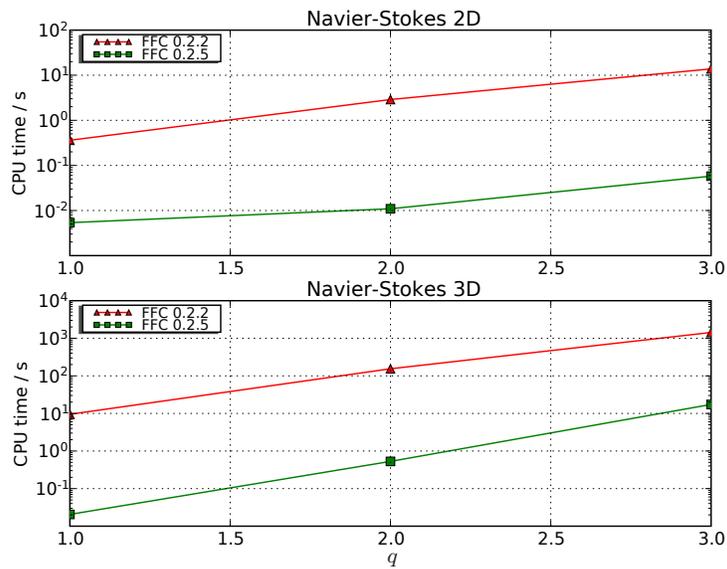}
    \caption{Compilation time as function of polynomial degree~$q$ for test case~3, the nonlinear term of
      the incompressible Navier--Stokes equations, specified in FFC
      by \texttt{a = v[i]*w[j]*u[i].dx(j)*dx}.}
    \label{fig:result,3}
  \end{center}
\end{figure}

\begin{figure}[htbp]
  \begin{center}
    \psfrag{q}{$q$}
    \includegraphics[width=11cm]{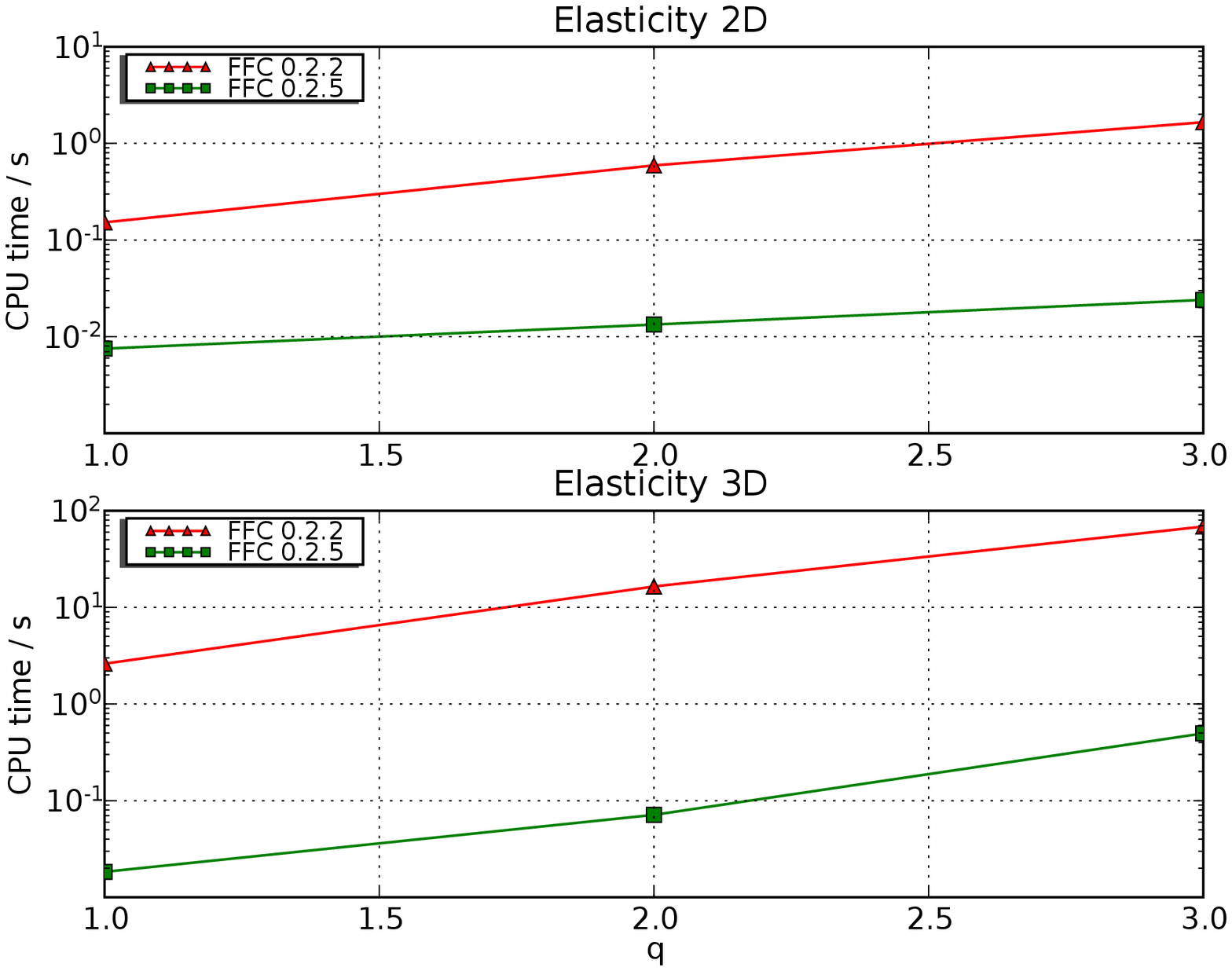}
    \caption{Compilation time as function of polynomial degree~$q$ for test case~4, the strain-strain term
      of linear elasticity, specified in FFC
      by \texttt{a = 0.25*(v[i].dx(j) + v[j].dx(i)) * (u[i].dx(j) + u[j].dx(i)) * dx}.}
    \label{fig:result,4}
  \end{center}
\end{figure}

\section{Conclusion}
\label{sec:conclusion}

The new improved precomputation of the reference tensor removes an
outstanding bottleneck in the compilation of variational forms. This
improves the possibilities of using FFC as a tool for rapid
prototyping and development. The feature set for FFC is also quickly
expanding, with an expanded form language, recently added support for
arbitrary mixed formulations, and with built-in support for functionals,
nonlinear formulations and error estimates on the horizon. At the same
time, FFC is still very much a test-bed for basic research in
efficient evaluation of general variational forms.

\begin{ack}

We wish to thank Johan Hoffman, Johan Jansson, Matthew Knepley, Ola
Skavhaug, Andy Terrel and Garth~N.~Wells for testing early versions of
the compiler and providing constructive feedback and real-world
problems that motivated the development of the new improved tabulation
of integrals.

\end{ack}
\bibliographystyle{acmtrans}
\bibliography{bibliography}

\begin{received}
\end{received}

\end{document}